\newtheorem{theorem}{Theorem}
\newtheorem{proposition}[theorem]{Proposition}
\newenvironment{proof}[1][Proof]{\noindent\textbf{#1.} }{\ \rule{0.5em}{0.5em}}
\begin{document}

\title{On a family of Levy processes without support in $\mathcal{S}^{\prime
}$}
\author{R. Vilela Mendes\thanks{%
rvilela.mendes@gmail.com; rvmendes@fc.ul.pt;
https://label2.tecnico.ulisboa.pt/vilela/} \\
CMAFcIO, Faculdade de Ci\^{e}ncias, Univ. Lisboa}
\date{ }
\maketitle

\begin{abstract}
The distributional support of the sample paths of L\'{e}vy processes is an
important issue for the construction of sparse statistical models, theories
of integration in infinite dimensions and the existence of generalized
solutions of stochastic partial differential equations driven by L\'{e}vy
white noise. Here one considers a family $K_{\alpha }$ $\left( 0<\alpha
<2\right) $ of L\'{e}vy processes which have no support in $\mathcal{S}%
^{\prime }$. For $1<\alpha <2$ they are supported in $\mathcal{K}^{\prime }$%
, the space of distributions of exponential type and for $0<\alpha \leq 1$
on similar spaces of power exponential type.
\end{abstract}

\section{Introduction}

Characterization of the support of paths of L\'{e}vy processes is an
important issue both for the construction of sparse statistical models \cite%
{Fageot}, for theories of integration in infinite dimensions \cite{Nunno}
and for the existence of generalized solutions of stochastic partial
differential equations driven by L\'{e}vy white noise \cite{Fageot-2}.

All c\`{a}dl\`{a}g L\'{e}vy processes, being locally Lebesgue integrable,
have support in $\mathcal{D}^{\prime }$, the space of distributions (dual to
the space $\mathcal{D}$ of infinitely differential functions with compact
support). However it turns out that the paths of most L\'{e}vy processes
have support on a smaller space $\mathcal{S}^{\prime }$, the space of
tempered distributions, dual to the space $\mathcal{S}$ of rapid decrease
functions, topologized by the family of norms%
\begin{equation}
\left\Vert \varphi \right\Vert _{p,r}=\sup_{x\in \mathbb{R}}\left\vert
x^{p}\varphi ^{(r)}\left( x\right) \right\vert ,\;p,r\in \mathbb{N}_{0}
\label{1.1}
\end{equation}%
Necessary and sufficient conditions have been obtained for the support of a L%
\'{e}vy process to be in $\mathcal{S}^{\prime }$ \cite{Lee-Shih} \cite%
{Dalang-Humeau}. In particular \cite{Dalang-Humeau}, a L\'{e}vy process $%
X_{t}$ has support in $\mathcal{S}^{\prime }$ if there is a $\eta >0$ such $%
\mathbb{E}\left\vert X_{1}^{\eta }\right\vert <\infty $. Conversely, the
process has no support in $\mathcal{S}^{\prime }$ if $\mathbb{E}\left\vert
X_{1}^{\eta }\right\vert \rightarrow \infty $ for any $\eta >0$. To show
that the second statement in the Dalang-Humeau theorem is not empty amounts
to find a process $X_{t}$ with L\'{e}vy measure $\nu \left( dx\right) $%
\begin{equation}
\int \left( x^{2}\wedge 1\right) \nu \left( dx\right) <\infty  \label{1.2}
\end{equation}%
but for which $\mathbb{E}\left\vert X_{1}^{\eta }\right\vert \rightarrow
\infty $ for any $\eta >0$.

Assuming that such a process can be found, it would be interesting to
characterize its support. A possible candidate space, intermediate between $%
\mathcal{S}^{\prime }$ and $\mathcal{D}^{\prime }$ is the $\mathcal{K}%
^{\prime }$ space of distributions of exponential type, dual to the $%
\mathcal{K}$ space of functions topologized by the norms \cite{Silva1} (see
also \cite{Hoskins}, ch. 3.6)%
\begin{equation}
\left\Vert \varphi \right\Vert _{p}=\max_{0\leq q\leq p}|\sup_{x\in \mathbb{R%
}}\left( e^{p\left\vert x\right\vert }\varphi ^{(q)}\left( x\right) \right) |
\label{1.3}
\end{equation}%
Denoting by $\mathcal{K}_{p}$ the Banach space for the norm $\left\Vert
\cdot \right\Vert _{p}$, $\mathcal{K}^{\prime }$ is the dual of the
countably normed space $\mathcal{K=\cap }_{p=0}^{\infty }\mathcal{K}_{p}$
and is a dense linear subspace of $\mathcal{D}^{\prime }$. Fourier
transforms of distributions in $\mathcal{K}^{\prime }$ are the tempered
ultradistributions in $\mathcal{U}^{\prime }$\ \cite{Silva2}. A distribution 
$\mu \in \mathcal{D}^{\prime }$ is in $\mathcal{K}^{\prime }$ if and only if
it can be represented in the form%
\begin{equation}
\mu
\left( x\right) =D^{r}\left( e^{a\left\vert x\right\vert }f\left( x\right)
\right)  \label{1.4}
\end{equation}%
for some numbers $r\in \mathbb{N}_{0}$ , $a\in \mathbb{R}$ and a bounded
continuous function $f$, with $D$ a derivative in the distributional sense.

\section{The $K_{\protect\alpha }$ processes}

The $K_{\alpha }$ processes are characterized by the triplet $\left( 0,0,\nu
_{\alpha }\right) $ with the following L\'{e}vy measure%
\begin{equation}
\begin{array}{lll}
\nu _{\alpha }\left( dx\right) =\frac{1}{\left( 1+\left\vert x\right\vert
\right) \log ^{1+\alpha }\left( 1+\left\vert x\right\vert \right) }dx &  & 
0<\alpha <2%
\end{array}
\label{2.1}
\end{equation}%
with $\nu _{\alpha }\left( \mathbb{R}\right) \rightarrow \infty $

The case $\alpha =1$ was considered by Fristedt \cite{Fristedt-2} for the
construction of a counter example but, as far as I know, no further studies
have been made on these processes.

\begin{proposition}
The paths of a $K_{\alpha }$ process are a. s. not supported in $\mathcal{S}%
^{\prime }$
\end{proposition}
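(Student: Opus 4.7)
The plan is to invoke the Dalang--Humeau criterion cited in the introduction: it suffices to verify that $\mathbb{E}|X_{1}|^{\eta}=\infty$ for every $\eta>0$. To do this I would use the standard characterisation of moments for Lévy processes, namely that for a process with triplet $(0,0,\nu)$ one has $\mathbb{E}|X_{1}|^{\eta}<\infty$ if and only if $\int_{|x|>1}|x|^{\eta}\,\nu(dx)<\infty$ (this is in Sato, Thm.~25.3). So the whole proposition reduces to the elementary integral estimate
\begin{equation*}
I_{\eta}:=\int_{|x|>1}\frac{|x|^{\eta}}{(1+|x|)\log^{1+\alpha}(1+|x|)}\,dx=\infty\qquad\text{for every }\eta>0.
\end{equation*}

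First I would do the sanity check that $\nu_{\alpha}$ is a genuine Lévy measure, i.e.\ $\int(x^{2}\wedge1)\,\nu_{\alpha}(dx)<\infty$: near the origin the density behaves like $|x|^{-(1+\alpha)}$ so $\int_{|x|<1}x^{2}\,\nu_{\alpha}(dx)<\infty$ precisely because $\alpha<2$, and at infinity the density decays like $1/(|x|\log^{1+\alpha}|x|)$, which is integrable for $\alpha>0$. Thus $K_{\alpha}$ is well defined.

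To show $I_{\eta}=\infty$, I would perform the substitution $u=\log(1+|x|)$, so that $du=dx/(1+|x|)$ and $|x|=e^{u}-1$. The integral then becomes, up to a finite factor,
\begin{equation*}
I_{\eta}\;=\;2\int_{\log 2}^{\infty}\frac{(e^{u}-1)^{\eta}}{u^{1+\alpha}}\,du.
\end{equation*}
For large $u$ the integrand is comparable to $e^{\eta u}/u^{1+\alpha}$, which tends to $+\infty$; hence the integral diverges for every $\eta>0$, no matter how small. This handles uniformly the delicate regime $0<\eta<1$ (where the original integrand decays like $|x|^{\eta-1}/\log^{1+\alpha}|x|$ and naive estimates are inconclusive).

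Combining the two steps, $\mathbb{E}|X_{1}|^{\eta}=\infty$ for every $\eta>0$, and the Dalang--Humeau theorem quoted above delivers that the paths of $K_{\alpha}$ are a.s.\ not supported in $\mathcal{S}^{\prime}$. The only mildly delicate point is the moments-via-tail-of-$\nu$ statement, but it is a textbook result; the log-exp change of variables is what makes the divergence transparent across the whole range $\eta>0$.
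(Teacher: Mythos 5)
Your proposal is correct, and its central computation --- the divergence of $\int_{|x|>1}|x|^{\eta}\,\nu_{\alpha}(dx)$ for every $\eta>0$ --- is exactly the integral the paper evaluates first (the paper bounds it below by splitting the range at a point $x^{\ast}$ and comparing with $\int \frac{dx}{(1+x)\log (1+x)}$; your substitution $u=\log (1+|x|)$ is cleaner and handles all $\eta>0$ uniformly). Where you differ is in how that computation is converted into the path statement. You pass through Sato's moment criterion ($\mathbb{E}|X_{1}|^{\eta}<\infty$ iff $\int_{|x|>1}|x|^{\eta}\nu(dx)<\infty$; Corollary 25.8 in Sato is the sharpest reference, Theorem 25.3 being the general submultiplicative version) and then apply the moment form of the Dalang--Humeau theorem directly to $K_{\alpha}$, which lets you skip the L\'{e}vy--It\^{o} decomposition altogether. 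The paper leaves that moment-transfer step implicit and formally concludes by a second argument: it splits off the compound Poisson part $L_{\alpha}^{P}$, computes Pruitt's function $\overline{h}(r)$ and the index $\overline{\beta}_{L}=0$, deduces from Proposition 48.10 of Sato that the supremum process grows faster than any power of $t$, and invokes the ``slowly growing'' necessary condition of Dalang--Humeau. Your route is shorter and logically self-contained; the paper's route additionally extracts the quantitative information that the growth index is zero, which foreshadows the identification of the actual support in $\mathcal{K}^{\prime}$ in the next proposition. Both arguments are valid, and your sanity check that $\nu_{\alpha}$ is a genuine L\'{e}vy measure (finite $\int (x^{2}\wedge 1)\,\nu_{\alpha}(dx)$ precisely for $0<\alpha<2$) is a worthwhile addition the paper only asserts.
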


\begin{proof}
Let 
\begin{equation}
K_{\alpha }\left( t\right) =L_{\alpha }^{M}\left( t\right) +L_{\alpha
}^{P}\left( t\right)   \label{2.2}
\end{equation}%
be the L\'{e}vy-It\^{o} decomposition of the $K_{\alpha }$ process, $%
L_{\alpha }^{M}$ being the compensated square integrable martingale
containing the small jumps of $K_{\alpha }$ and $L_{\alpha }^{P}$ the
compound Poisson process containing the large jumps. Because $L_{\alpha
}^{M}\left( t\right) $ is a. s. in $\mathcal{S}^{\prime }$, it remains to
analyze the $L_{\alpha }^{P}$ component, with L\'{e}vy measure%
\begin{equation}
\nu _{\alpha }^{\prime }\left( dx\right) =\boldsymbol{1}_{\left\vert
x\right\vert >1}\nu _{\alpha }\left( dx\right)   \label{2.3}
\end{equation}%
The L\'{e}vy measure $\nu ^{\prime }\left( dx\right) $ has no positive
moments. Let $\eta >0$%
\begin{eqnarray*}
\int_{\mathbb{R}}\mathbf{1}_{\left\vert x\right\vert \geq 1}\left\vert
x\right\vert ^{\eta }\nu _{\alpha }\left( dx\right)  &=&2\int_{1}^{\infty }%
\frac{x^{\eta }}{\left( 1+x\right) \log ^{1+\alpha }\left( 1+x\right) }dx \\
&>&2\int_{1}^{x^{\ast }}\frac{x^{\eta }}{\left( 1+x\right) \log ^{1+\alpha
}\left( 1+x\right) }dx+2\int_{x^{\ast }}^{\infty }\frac{dx}{\left(
1+x\right) \log \left( 1+x\right) } \\
&=&2\int_{1}^{x^{\ast }}\frac{x^{\eta }}{\left( 1+x\right) \log ^{1+\alpha
}\left( 1+x\right) }dx+2\left. \log \left( \log \left( 1+x\right) \right)
\right\vert _{x^{\ast }}^{\infty }\rightarrow \infty 
\end{eqnarray*}%
$x^{\ast }$ being the solution of%
\begin{equation*}
x^{\ast }=\log ^{\frac{\alpha }{\eta }}\left( 1+x^{\ast }\right) 
\end{equation*}

Furthermore, the rate of growth of the supremum $L^{\ast }\left( t\right)
=\sup_{0\leq s\leq t}\left\vert L_{\alpha }^{P}\left( t\right) \right\vert $
of the modulus process may also be analyzed by computing its index (\cite%
{Pruitt}, \cite{Sato} ch. 48) 
\begin{equation}
\overline{h}\left( r\right) =\int_{\left\vert x\right\vert >r}\nu _{\alpha
}^{\prime }\left( dx\right) +r^{-2}\int_{\left\vert x\right\vert \leq
r}\left\vert x\right\vert ^{2}\nu _{\alpha }^{\prime }\left( dx\right)
+r^{-1}\int_{1<\left\vert x\right\vert \leq r}x\nu _{\alpha }^{\prime
}\left( dx\right)   \label{2.5}
\end{equation}%
Because all terms are positive one has%
\begin{equation}
\overline{h}\left( r\right) >\int_{\left\vert x\right\vert >r}\nu _{\alpha
}^{\prime }\left( dx\right) =\frac{1}{\alpha \log ^{\alpha }\left(
1+r\right) }  \label{2.6}
\end{equation}%
Then the index $\overline{\beta }_{L}$ is%
\begin{equation}
\overline{\beta }_{L}=\sup \left\{ \eta :\lim \sup_{r\rightarrow \infty
}r^{\eta }\overline{h}\left( r\right) =0\right\} =0  \label{2.7}
\end{equation}%
Therefore by proposition 48.10 in \cite{Sato}%
\begin{equation}
\lim \sup_{t\rightarrow \infty }t^{-1/\eta }L^{\ast }\left( t\right) =\infty 
\label{2.8}
\end{equation}%
for any positive $\eta $. That is, the process is not slowly growing.

Dalang and Humeau \cite{Dalang-Humeau} have proved that if a process has
support in $\mathcal{S}^{\prime }$ there is a set of probability one for
which the function $t\rightarrow L_{\alpha }^{P}\left( t\right) $ is slowly
growing. Therefore the paths of $L_{\alpha }^{P}\left( t\right) $ are almost
surely not supported in $\mathcal{S}^{\prime }$.
\end{proof}

\begin{proposition}
The paths of the $K_{\alpha }$ process, for $1<\alpha <2$, have a. s.
support in $\mathcal{K}^{\prime }$
\end{proposition}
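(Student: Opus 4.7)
The plan is to exploit the characterization (1.4): any distribution on $\mathbb{R}$ arising from a locally integrable function of exponential type lies in $\mathcal{K}^{\prime}$. Using the L\'evy--It\^o decomposition $K_{\alpha}(t)=L_{\alpha}^{M}(t)+L_{\alpha}^{P}(t)$ as in the previous proof, the martingale part is already known to be a.s.\ in $\mathcal{S}^{\prime}\subset\mathcal{K}^{\prime}$, so it remains to show $L_{\alpha}^{P}\in\mathcal{K}^{\prime}$ a.s. I would do this by first establishing an almost-sure growth bound $|L_{\alpha}^{P}(t)|\leq C_{\omega}e^{at}$ for some deterministic $a>0$ and random $C_{\omega}$, and then reading off the representation (1.4) with $r=1$.

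For the growth estimate, I would write $L_{\alpha}^{P}(t)=\sum_{i=1}^{N(t)}J_{i}$, where $N(t)$ is a Poisson process of finite rate $\lambda=\nu_{\alpha}(\{|x|>1\})$ and the $J_{i}$ are i.i.d.\ with common law $\nu_{\alpha}^{\prime}/\lambda$. For any $\beta<\alpha$ the substitution $u=\log(1+x)$ gives
$$\int\log^{\beta}(1+|x|)\,\nu_{\alpha}^{\prime}(dx)=2\int_{1}^{\infty}\frac{dx}{(1+x)\log^{1+\alpha-\beta}(1+x)}<\infty,$$
so $\mathbb{E}\log^{\beta}(1+|J_{1}|)<\infty$ and Borel--Cantelli yields $|J_{i}|\leq e^{i^{1/\beta}}$ for all sufficiently large $i$, a.s. Combined with $N(t)\leq 2\lambda t$ for all large $t$ (law of large numbers for $N$), this gives $|L_{\alpha}^{P}(t)|\leq 2\lambda t\,\exp\bigl((2\lambda t)^{1/\beta}\bigr)$ eventually. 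The hypothesis $1<\alpha<2$ permits the choice $\beta\in(1,\alpha)$, so $1/\beta<1$ and the bound collapses to $|L_{\alpha}^{P}(t)|\leq C_{\omega}e^{at}$ a.s.\ for any preassigned $a>0$.

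With the growth bound in hand, set $g(t)=\int_{0}^{t}L_{\alpha}^{P}(s)\,ds$ for $t\geq 0$ and extend by $g(t)=0$ for $t<0$. Since the path is c\`{a}dl\`{a}g and exponentially bounded, $g$ is continuous with $|g(t)|\leq C_{\omega}^{\prime}e^{a|t|}$; hence $f(t):=e^{-a|t|}g(t)$ is bounded and continuous. In the distributional sense $L_{\alpha}^{P}=Dg=D\bigl(e^{a|\cdot|}f\bigr)$, which by (1.4) places $L_{\alpha}^{P}$, and therefore $K_{\alpha}$, in $\mathcal{K}^{\prime}$ almost surely. The main obstacle is the a.s.\ exponential growth estimate; the tail integrability $\int\log^{\beta}(1+|x|)\,\nu_{\alpha}^{\prime}(dx)<\infty$ with some $\beta>1$ is exactly what fails for $\alpha\leq 1$, which both forces the restriction $1<\alpha<2$ here and foreshadows the power-exponential type required in the subcritical range.
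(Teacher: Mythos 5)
Your proof is correct, but it takes a genuinely different route from the paper. The paper works with the modulus process $\left\vert L_{\alpha }^{P}\right\vert $ as a subordinator and applies Bertoin's integral test (Theorem 13 of his book) to the tail $\overline{\nu _{\alpha }^{\prime }}(x)=\frac{1}{\alpha \log ^{\alpha }(1+x)}$: the integral $\int_{1}^{\infty }\overline{\nu _{\alpha }^{\prime }}(e^{cx})\,dx$ converges precisely when $\alpha >1$, so $e^{c\left\vert x\right\vert }$ is an upper function and membership in $\mathcal{K}^{\prime }$ is read off from the norms (\ref{1.3}). You instead argue elementarily from the compound Poisson representation: the moment computation $\int \log ^{\beta }(1+\left\vert x\right\vert )\,\nu _{\alpha }^{\prime }(dx)<\infty $ for $\beta <\alpha $ is right, Borel--Cantelli plus the law of large numbers for $N(t)$ gives the a.s.\ bound $\left\vert L_{\alpha }^{P}(t)\right\vert \leq C_{\omega }e^{at}$, and you then exhibit the representation (\ref{1.4}) explicitly with $r=1$ by integrating the path. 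Each approach buys something: the paper's integral test is a zero--one dichotomy, so it simultaneously confirms that polynomial upper functions fail (consistent with Proposition 1) and locates the threshold $\alpha =1$ sharply; your argument is self-contained, handles the signed process directly without passing to a subordinator, produces a slightly stronger conclusion (subexponential growth $t\exp ((2\lambda t)^{1/\beta })$ with $1/\beta <1$, hence growth slower than $e^{at}$ for every $a>0$), and makes the final step into $\mathcal{K}^{\prime }$ more rigorous than the paper's brief appeal to the norms, by actually constructing the representation (\ref{1.4}). Your closing observation that the failure of $\log ^{\beta }$-integrability for $\beta >1$ when $\alpha \leq 1$ forces power-exponential weights matches the paper's subsequent introduction of the spaces $\mathcal{K}_{\beta }^{\prime }$.
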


\begin{proof}
Here one considers only the positive large jumps or equivalently the modulus
process $\left\vert L_{\alpha }^{P}\left( t\right) \right\vert $. This
process is a subordinator and to find its support one looks for an
appropriate upper function. This subordinator has Laplace exponent%
\begin{equation}
\Phi \left( \lambda \right) =\int_{\left( 0,\infty \right) }\left(
1-e^{-\lambda x}\right) \nu _{\alpha }^{\prime }\left( dx\right)  \label{2.9}
\end{equation}%
and the tail of the L\'{e}vy measure is%
\begin{equation}
\overline{\nu _{\alpha }^{\prime }}\left( x\right) =\nu _{\alpha }^{\prime
}\left( x,\infty \right) =\frac{1}{\alpha \log ^{\alpha }\left( 1+x\right) }
\label{2.10}
\end{equation}%
Now one looks for an upper function and uses Theorem 13 in \cite{Bertoin}.
Let $f\left( x\right) $ be an increasing function that increases faster than 
$x$ and compute%
\begin{equation}
I_{\alpha }\left( f\right) =\int_{1}^{\infty }\overline{\nu _{\alpha
}^{\prime }}\left( f\left( x\right) \right) dx  \label{2.11}
\end{equation}%
For $f\left( x\right) =x^{\beta }$ $\left( \beta >1\right) $ this integral
is $\infty $, hence $\lim \sup_{t\rightarrow \infty }\left( \left\vert
L_{\alpha }^{P}\left( t\right) \right\vert /x^{\beta }\right) =\infty $ a.
s. consistent with no support in $\mathcal{S}^{\prime }$. However if $%
f\left( x\right) =\exp \left( c\left\vert x\right\vert \right) $, $c>0$, $%
I_{\alpha }\left( f\right) <\infty $ for $1<\alpha <2$. Hence in this case,
by Theorem 13 in \cite{Bertoin}, $\lim \sup_{t\rightarrow \infty }\left(
\left\vert L_{\alpha }^{P}\left( t\right) \right\vert /\exp \left(
c\left\vert x\right\vert \right) \right) =0$, $\exp \left( c\left\vert
x\right\vert \right) $ is an upper function for the process and from the
definition (\ref{1.3}) of the norms in $\mathcal{K}$ it follows that the
process has a. s. support in $\mathcal{K}^{\prime }$.
\end{proof}

For the $0<\alpha \leq 1$ case it is also possible to pinpoint a precise
support for the $K_{\alpha }$ processes. For each $\alpha $ one defines a
family of norms%
\begin{equation}
\left\Vert \varphi \right\Vert _{p,\beta }=\max_{0\leq q\leq p}|\sup_{x\in 
\mathbb{R}}\left( e^{p\left\vert x\right\vert ^{\beta }}\varphi ^{(q)}\left(
x\right) \right) |,\;\;\;\beta >1  \label{2.12}
\end{equation}%
with $\beta >\frac{1}{\alpha }$, denotes by $\mathcal{K}_{p,\beta }$ the
Banach space associated to the norm $\left\Vert \cdot \right\Vert _{p,\beta
} $ and by $\mathcal{K}_{\beta }$ the countably normed space $\mathcal{K}%
_{\beta }\mathcal{=\cap }_{p=0}^{\infty }\mathcal{K}_{p,\beta }$. The dual
of $\mathcal{K}_{\beta }$, which one denotes as $\mathcal{K}_{\beta
}^{\prime }$, provides a distributional support for the $K_{\alpha }$
processes whenever $\alpha >\frac{1}{\beta }$.

For many applications both the properties of L\'{e}vy processes and L\'{e}vy
white noise are needed. For each event $\omega $ in the probability space, $%
K_{\alpha }-$white noise is defined by the derivative in the sense of
distributions%
\begin{equation}
\left\langle \overset{\cdot }{K}_{\alpha }\left( \omega \right) ,\varphi
\right\rangle :=-\left\langle K_{\alpha }\left( \omega \right) ,\varphi
^{\prime }\right\rangle :=\int_{\mathbb{R}_{+}}K_{\alpha }\left( t,\omega
\right) \varphi ^{\prime }\left( t\right) dt  \label{2.13}
\end{equation}%
$\varphi $ being a function in $\mathcal{K}$ or $\mathcal{K}_{\beta }$.
Because there is no upper bound on the order of the derivatives in (\ref{1.3}%
) or (\ref{2.12}) one concludes that $K_{\alpha }-$white noise has the same
support properties as the $K_{\alpha }$ processes.


\begin{thebibliography}{99}
\bibitem{Fageot} J. Fageot, A. Amini and M. Unser;\textit{\ On the
continuity of characteristic functionals and sparse stochastic modeling}, J.
Fourier Anal. Appl. 20 (2014) 1179--1211.

\bibitem{Nunno} G. Di Nunno, B. \O ksendal and F. Proske; \textit{White
noise analysis for L\'{e}vy Processes}, J. of Functional Analysis 206 (2004)
109-148.

\bibitem{Fageot-2} J. Fageot and T. Humeau; \textit{The Domain of Definition
of the L\'{e}vy White Noise}, Stochastic Processes and their Applications
135 (2021) 75-102.

\bibitem{Lee-Shih} Y.-J. Lee and H.-H. Shih; \textit{L\'{e}vy white noise
measures on infinite-dimensional spaces: Existence and characterization of
the measurable support}, Journal of Functional Analysis 237 (2006) 617--633.

\bibitem{Dalang-Humeau} R. C. Dalang and T. Humeau; \textit{L\'{e}vy
processes and L\'{e}vy white noise as tempered distributions}, Ann. Probab.
45 (2017) 4389-4418.

\bibitem{Silva1} J. Sebasti\~{a}o e Silva; \textit{Les fonctions analytiques
comme ultra-distributions dans le calcul op\'{e}rationnel}, Math. Annalen
136 (1958) 58-96.

\bibitem{Hoskins} R. F. Hoskins and J. Sousa Pinto; \textit{Theories of
generalized functions: Distributions, ultradistributions and other
generalized functions}, 2nd. edition Woodhead Publ., Philadelphia 2011.

\bibitem{Silva2} J. Sebasti\~{a}o e Silva; \textit{Les s\'{e}ries de multip%
\^{o}les des physiciens et la th\'{e}orie des ultradistributions}, Math.
Annalen 174 (1967) 109-142.

\bibitem{Fristedt-1} B. E. Fristedt; \textit{Sample function behavior of
increasing processes with stationary independent increments}, Pacific
Journal of Math. 21 (1967) 21-33.

\bibitem{Pruitt} W. E. Pruitt; \textit{The growth of random walks and L\'{e}%
vy processes}, Ann. Proba. 9 (1981) 948-956.

\bibitem{Sato} K.-I. Sato; \textit{L\'{e}vy processes and infinitely
divisible distributions}, Cambridge Univ. Press, Cambridge 1999.

\bibitem{Fristedt-2} B. E. Fristedt and W. E. Pruitt; \textit{Lower
Functions for Increasing Random Walks and Subordinators}, Z.
Wahrscheinlichkeitstheorie 18 (1971) 167-182.

\bibitem{Bertoin} J. Bertoin; \textit{L\'{e}vy processes}, Cambridge
University Press, Cambridge 1996.
\end{thebibliography}
\end{document}